\title{Singularly perturbed reaction-diffusion problems as first order systems}
\author{Sebastian Franz\footnote{
          Institute of Scientific Computing, Technische Universit\"at Dresden, Germany.
          \mbox{e-mail}: sebastian.franz@tu-dresden.de}
       }
\date{\today}
\renewcommand*\env@matrix[1][r]{\hskip -\arraycolsep
  \let\@ifnextchar\new@ifnextchar
  \array{*\c@MaxMatrixCols #1}}
\DeclareMathOperator{\Grad}{{grad}}
\DeclareMathOperator{\Div}{{div}}
\DeclareMathOperator{\Curl}{{curl}}
\DeclareMathOperator{\meas}{meas}
\newcommand{\e}{\mathrm{e}}
\newcommand{\pt}{\partial}
\newcommand{\eps}{\varepsilon}
\newcommand{\laplace}{\Delta}
\newcommand{\norm}[2]{\|{#1}\|_{#2}}
\newcommand{\tnorm}[1]{\left|\!\!\;\left|\!\!\;\left| {#1}
                       \right|\!\!\;\right|\!\!\;\right|}
\newcommand{\I}{\mathcal{I}}
\newcommand{\R}{\mathbb{R}}
\newcommand{\U}{\mathcal{U}}
\newcommand{\PS}{\mathcal{P}}
\newcommand{\QS}{\mathcal{Q}}
\newcommand{\vecsymb}[1]{\boldsymbol{#1}}
\newcommand{\vn}{\vecsymb{n}}
\newcommand{\vq}{\vecsymb{q}}
\newcommand{\vs}{\vecsymb{s}}
\newcommand{\vu}{\vecsymb{u}}
\newcommand{\vv}{\vecsymb{v}}
\newcommand{\vw}{\vecsymb{w}}
\newcommand{\vI}{\boldsymbol{\mathcal{I}}}
\newcommand{\vJ}{\boldsymbol{\mathcal{J}}}
\newcommand{\vP}{\boldsymbol{\mathcal{P}}}
\newcommand{\valpha}{\vecsymb{\alpha}}
\newcommand{\veta}{\vecsymb{\eta}}
\newcommand{\vxi}{\vecsymb{\xi}}
\newcommand{\DS}{\mathcal{D}}
\newcommand{\scp}[1]{\left\langle #1 \right\rangle}
\newcommand{\pmtrx}[1]{\ensuremath{\begin{pmatrix}#1 \end{pmatrix}}}
\newcommand{\rarrow}{\quad\Rightarrow\quad}
\newcommand{\qmbox}[1]{\quad\mbox{#1}\quad}
\theoremstyle{plain}
\newtheorem{theorem}{Theorem}[section]
\newtheorem{lemma}[theorem]{Lemma}
\newtheorem{ass}[theorem]{Assumption}
\newtheorem{corollary}[theorem]{Corollary}
\newtheorem{remark}[theorem]{Remark}
\begin{document}
  \maketitle
  \begin{abstract}
    We consider a singularly perturbed reaction diffusion problem as a first order two-by-two system.
    Using piecewise discontinuous polynomials for the first component and $H_{\Div}$-conforming 
    elements for the second component we provide a convergence analysis on layer adapted meshes 
    and an optimal convergence order in a balanced norm that is comparable with a balanced 
    $H^2$-norm for the second order formulation.
  \end{abstract}

  \textit{AMS subject classification (2010): 65N12, 65N15, 65N30} 

  \textit{Key words: reaction diffusion problem, singularly perturbed, first order system, balanced norm} 

  \section{Introduction}\label{sec:intro}
  Consider the singularly perturbed reaction diffusion problem, given in $\Omega=(0,1)^2$ by
  \begin{gather}\label{eq:original}
    -\eps^2\laplace u+cu=f,
  \end{gather}
  where $0<\eps\ll1$, $c\in W^{1,\infty}$, $c_\infty\geq c\geq c_0>0$ and $u=0$ on $\pt\Omega$. 
  We rewrite the problem, using $\vu=-\eps\Grad^\circ u$, into a first order system
  \begin{gather}\label{eq:fos}
    \left[\pmtrx{c&0\\0&1}+\pmtrx{[c]0&\eps\Div\\\eps\Grad^\circ&0}\right]\pmtrx{u\\\vu}=\pmtrx{f\\0},
  \end{gather}
  where $\Grad^\circ$ denotes the gradient in $H^1_0(\Omega)$ and $\Div$ its adjoint, the divergence.  
  This formulation is also called a mixed formulation. For its weak formulation let $\scp{\cdot,\cdot}$ denote 
  the $L^2$-scalar product over $\Omega$. Then \eqref{eq:fos} becomes with $V=(v,\vv)\in L^2(\Omega)\times H_{\Div}(\Omega)$
  \begin{gather*}
    \scp{cu,v}+\eps\scp{\Div\vu,v}+\scp{\vu,\vv}+\eps\scp{\Grad^\circ u,\vv}=\scp{f,v}
  \end{gather*}
  which can also be written for $U=(u,\vu)\in L^2(\Omega)\times H_{\Div}(\Omega)$ as
  \begin{gather}\label{eq:weak}
    \scp{cu,v}+\eps\scp{\Div\vu,v}+\scp{\vu,\vv}-\eps\scp{u,\Div\vv}=\scp{f,v}.
  \end{gather}
  This is the weak form we will discretise and analyse.
  
  Singularly perturbed reaction diffusion problems were analysed in many papers, see e.g. \cite{Apel99,RST08}
  The associated norm to \eqref{eq:original} is the $\eps$-weighted $H^1$-norm, also called energy norm. 
  Unfortunately, that norm is not strong enough to see the boundary layers. 
  For the boundary $x=0$ the corresponding layer function is of the type $\e^{-x/\eps}$. Here it holds
  \[
    \norm{\e^{-x/\eps}}{L^2(\Omega)}+\eps\norm{\Grad\e^{-x/\eps}}{L^2(\Omega)}
    \lesssim \eps^{1/2}
    \stackrel{\eps\to 0}{\longrightarrow}0.
  \]
  Therefore over the last years convergence in a balanced norm, where the boundary 
  layers do not vanish for $\eps\to 0$, was considered, see \cite{LS11,RSch15,FrR19,AMM19}. 
  For the lowest order Raviart-Thomas elements on a Shishkin mesh the system \eqref{eq:weak} 
  was also considered in \cite[Section 5]{Roos16} and analysed in a balanced $H^1$-comparable norm.
  
  In this paper we prove optimal convergence orders in a stronger balanced $H^2$-comparable norm 
  \[
    \tnorm{U}_{bal}^2\sim\norm{u}{L^2(\Omega)}^2+\eps^{-1}\norm{\vu}{L^2(\Omega)}^2+\eps\norm{\Div \vu}{L^2(\Omega)}^2
  \]
  for a variety of $H_{\Div}$-conforming 
  elements on general layer-adapted meshes. The paper is organised as follows. In Section~\ref{sec:basics} we define the numerical method and recall 
  results for a solution decomposition and interpolation errors. In Section~\ref{sec:analysis} we provide
  the convergence analysis and in the final Section~\ref{sec:numerics} some numerical examples illustrating our
  theoretical results are given.
  
  \textbf{Notation:} We denote vector valued functions with a bold font. $L^p(D)$ with the norm $\norm{\cdot}{L^p(D)}$
  is the classical Lebesque space of function integrable to the power $p$ over a domain $D\subset\R^2$ and $W^{\ell,p}(D)$
  the corresponding Sobolev space for derivatives up to order $\ell$. Furthermore, we write $A\lesssim B$ if there exists 
  a generic constant $C>0$ such that $A\leq C\cdot B$.
  
  \section{Numerical method and interpolation errors}\label{sec:basics}
  In order to define our numerical method, we need discrete spaces defined over an appropriate mesh. 
  A basic tool for defining this mesh is the knowledge of a solution decomposition, especially
  the structure of layers. 
  \begin{ass}
    The solution $u$ of \eqref{eq:original} can be written as
    \[
      u=s+w_1+w_2+w_3+w_4+w_{12}+w_{23}+w_{34}+w_{41}
    \]
    where $s$ is the smooth part, $w_i$ are boundary layers and $w_{ij}$ are corner layers (both counted counterclockwise).
    To be more precise, for any given degree $k$ it holds for $0\leq i,j\leq k+2$,
    \begin{align*}
      \norm{\pt_x^i\pt_y^j s}{L^\infty(\Omega)}&\lesssim 1,&
      |\pt_x^i\pt_y^jw_1(x,y)|&\lesssim \eps^{-i}\e^{-x/\eps},&
      |\pt_x^i\pt_y^j w_{12}(x,y)|&\lesssim \eps^{-(i+j)}\e^{-(x+y)/\eps},
    \end{align*}
    and analogously for the other boundary layers and corner layers.
  \end{ass}
  \begin{remark}
    Using above solution decomposition for $u$ we derive a similar decomposition
    for the solution $U$ of \eqref{eq:weak}, as $U=(u,\vu)$ and $\vu=-\eps\Grad u$.
    
    Such assumptions on a solution decomposition are very common in the analysis of singularly perturbed
    problems. They hold true under compatibility and regularity conditions on the data, see e.g. \cite{LMSZ08,CGOR05,HK90}. 
  \end{remark}

  We follow \cite{RL99} and construct an S-type mesh using the information of the solution decomposition.
  First, we define a transition point $\lambda$, such that a typical boundary layer function is small enough:
  \[
    \exp(-\lambda/\eps) = N^{-\sigma}
    \rarrow
    \lambda = \sigma\eps\ln(N),
  \]
  for a constant $\sigma>0$ specified later. We additionally assume
  \[
    \lambda = \min\left\{\sigma\eps\ln(N),\frac{1}{4}\right\},    
  \]
  as otherwise $\eps$ is large enough to facilitate a standard numerical analysis.
  Now $0=x_0<x_1<\dots<x_N=1$ are given by
  \[
   x_i:=\begin{cases}
          \sigma\eps\phi\left(\frac{2i}{N}\right), &i=0,\dots,N/4,\\
          \frac{2i}{N}(1-2\lambda)-\frac{1}{2}+2\lambda,          &i=N/4,\dots,3N/4,\\
          1-\sigma\eps\phi\left(2-\frac{2i}{N}\right), &i=3N/4,\dots,N,\\
        \end{cases}
  \]
  where $\phi$ is a mesh-generating function with the properties
  \begin{itemize}
    \item $\phi$ is monotonically increasing,
    \item $\phi(0)=0$ and $\phi(1/2)=\ln N$,
    \item $\phi$ is piecewise differentiable with $\max \phi'\leq C N$ and
    \item $\min\limits_{i=1,\dots,N/4}\left(\phi\left(\frac{2i}{N}\right)-\phi\left(\frac{2(i-1)}{N}\right)\right)\geq C N^{-1}$.
  \end{itemize}
  The first three conditions are given in \cite{RL99}, while the last one allows the mesh-widths
  inside the boundary layers to be bounded from below, see also \cite{FrM10_1}. Related to $\phi$ we define the mesh 
  characterising function $\psi$ by
  \[
    \psi=\e^{-\phi}.
  \]
  Several S-type meshes are given in \cite{RL99} fulfilling above properties. We only provide the definitions of the 
  two mostly used. For the Shishkin mesh we have
  \[
    \phi(t) = 2t\ln N,\quad
    \psi(t) = N^{-2t},\quad
    \max|\psi'| =2\ln N
  \]
  and the Bakhvalov-S-mesh
  \[
    \phi(t)=-\ln(1-2t(1-N^{-1})),\quad
    \psi(t)=1-2t(1-N^{-1}),\quad
    \max|\psi'|= 2.
  \]
  In addition to the mesh generating and characterising functions also $\max|\psi'|:=\max\limits_{t\in[0,1/2]}|\psi'(t)|$
  is given, that enters all the error estimates on S-type meshes.
  
  The two-dimensional mesh $T_N$ is then defined by all cells $K_{ij}:=(x_{i-1},x_i)\times(x_{j-1},x_j)$ for $1\leq i,j\leq N$.
  Note that it holds
  \begin{gather}\label{eq:hbound}
    h_i:=x_i-x_{i-1}
       \lesssim \begin{cases}
                  \eps N^{-1}\max|\psi'|e^{x/(\sigma\eps)},&i\leq N/4\text{ or }i>3N/4,\text{ and }x\in[x_{i-1},x_i],\\
                  N^{-1},& \text{otherwise},
                \end{cases}
  \end{gather}
  and also the simpler bound
  \[
    h:=\max_{i=1,\dots,N/4}h_i
      \lesssim \eps.
  \]
  Let us denote two subdomains of $\Omega$ per layer function, exemplarily given for $w_1$ by
  \[
    \Omega_1:=[0,\lambda]\times[0,1]\qmbox{ and }\Omega_1^*:=[0,x_{N/4-1}]\times[0,1]\subset\Omega_1
  \]
  and for $w_{12}$ by 
  \[
    \Omega_{12}:=[0,\lambda]^2\qmbox{ and }\Omega_{12}^*:=[0,x_{N/4-1}]^2\subset\Omega_{12}.
  \]  
  With \eqref{eq:weak} only needing $L^2$-regularity for the first component and $H_{\Div}$-regularity for the second component,
  our discrete spaces are
  \[
    \U_N:=\{(u_N,\vu_N)\subset L^2(\Omega)\times H_{\Div}(\Omega): \forall K\subset T_N: u_N|_K\in\QS_k(K),\,\vu_N|_K\in \DS_k(K)\},
  \]
  where $\QS_k(K)$ is the space of polynomials with degree up to $k$ in each variable on the cell $K$ of $T_N$.
  For the discretisation of $H_{\Div}$ with $\DS_k(K)$ we can use 
  \begin{itemize}
    \item the Raviart-Thomas space
          \[
            RT_k(K)=\QS_{k+1,k}(K)\times\QS_{k,k+1}(K),
          \]
          introduced by Raviart and Thomas in \cite{RT77} on triangular meshes, see also \cite{BBF13} for rectangular meshes,
          where $\QS_{p,q}(K)$ is the space of polynomials with degree $p$ in $x$ and degree $q$ in $y$ on the cell $K$ or
    \item the Brezzi-Douglas-Marini space
          \[
            BDM_k( K):= (\PS_k(K))^2\oplus\text{span}\{\Curl (x^{k+1}y) ,\Curl (xy^{k+1})\},
          \]
          see \cite{BDM85}, where $\PS_k(K)$ is the space of polynomials of total degree $k$ on the cell $K$
          and $\Curl w=(\pt_y w,-\pt_x w)$.
  \end{itemize}

  Then the discrete method reads: Find $U_N=(u_N,\vu_N)\in \U_N$, s.t. for all $V\in\U_N$ it holds
  \begin{gather}\label{eq:blf}
    B(U_N,V):=\scp{cu_N,v}+\eps\scp{\Div\vu_N,v}+\scp{\vu_N,\vv}-\eps\scp{u_N,\Div\vv}=\scp{f,v}.
  \end{gather}
  Note that the solution $U$ of \eqref{eq:weak} does also fulfill \eqref{eq:blf} and we therefore have
  Galerkin orthogonality
  \begin{gather}\label{eq:GalOrth}
    B(U_N-U,V)=0,\quad \forall V\in\U_N.
  \end{gather}
  
  \section{Numerical analysis}\label{sec:analysis}
  Let us start with an interpolation operator into $\U_N$ given by its two components. 
  The first one $\I_1$ will be a weighted local $L^2$-projection, defined on any $K\subset T_N$
  by
  \[
    \scp{c(\I_1 u-u),v}_K=0\text{ for all } v\in \QS_k(K).
  \]
  Note that we have similar to the standard $L^2$-projection anisotropic interpolation error estimates, 
  i.e. for any $0\leq\ell\leq k+1$ it holds on a cell $K$ with dimension $h_x\times h_y$
  \begin{gather}\label{eq:aniso:L2}
    \norm{\I_1v-v}{L^2(K)}\lesssim h_x^\ell\norm{\pt_x^\ell v}{L^2(K)}+h_y^\ell\norm{\pt_y^\ell v}{L^2(K)}
  \end{gather}
  if $v\in H^{\ell}(K)$. 
  These estimates can be shown with the help of a pointwise interpolation operator and 
  the $L^2$-stability of the $L^2$-projection, see \cite{RSch15}, or directly by using the theory of Apel, \cite{Apel99}.
  Note that the $L^2$-projection is also $L^\infty$-stable, see also \cite{Oswald13}.
  
  The second operator $\vI_2$ utilises the classical interpolation operator $\vJ$ on $\DS_k$. 
  It is defined on each cell $K$ for
  \begin{itemize}
    \item $\DS_k(K)=RT_k(K)$ by
          \begin{subequations}\label{eq:inter:RT}
          \begin{align}
            \int_{F}(\vJ\vv-\vv)\cdot \vn\cdot q&=0,\quad\forall q\in \PS_k(F)\text{ for all faces } F\subset\partial K,\\
            \int_{K}(\vJ\vv-\vv)\cdot\vq&=0,\quad\forall \vq\in \QS_{k-1,k}(K)\times\QS_{k,k-1}(K),
          \end{align}
          \end{subequations}
    \item $\DS_k(K)=BDM_k(K)$ by
          \begin{subequations}\label{eq:inter:BDM}
          \begin{align}
            \int_{F}(\vJ\vv-\vv)\cdot \vn\cdot q&=0,\quad\forall q\in \PS_{k}(F),\forall F\subset\partial K,\\
            \int_{K}(\vJ\vv-\vv)\cdot\vq&=0,\quad\forall \vq\in (\PS_{k-2}(K))^2.
          \end{align}
          \end{subequations}
  \end{itemize}

  It holds the anisotropic interpolation error estimates for $\vJ$, see \cite{Stynes14,Fr21},  
  \begin{gather}\label{eq:aniso:RT:1}
    \norm{\vJ\vv-\vv}{L^2(K)}\lesssim \sum_{s=0}^{k+1} h_x^{\ell-s}h_y^s\norm{\pt_x^{k+1-s}\pt_y^s \vv}{L^2(K)}
  \end{gather}
  if $\vv\in H^{k+1}(K)$. Note that for $RT_k$  
  an even sharper result involving only pure derivatives of $\vv$ holds, see \cite{Fr21}.
  In addition, we have also anisotropic interpolation error estimates for the $L^2$-norm of the divergence, see \cite{Fr21},
  \begin{subequations}\label{eq:aniso:RT:2}
  \begin{align}
    RT_k:\quad\norm{\Div(\vJ\vv-\vv)}{L^2(K)}
      &\lesssim h_x^{k+1}\norm{\pt_x^{k+1}\Div \vv}{L^2(K)}+h_y^{k+1}\norm{\pt_y^{k+1}\Div \vv}{L^2(K)},\\
    BDM_k:\quad\norm{\Div(\vJ\vv-\vv)}{L^2(K)}
      &\lesssim \sum_{|\valpha|=k}h_x^{\alpha_1}h_y^{\alpha_2}\norm{\pt_x^{\alpha_1}\pt_y^{\alpha_2}\Div \vv}{L^2(K)},
  \end{align}
  \end{subequations}
  if $\vv$ is such that $\Div\vv\in H^{k+1}(K)$ for $RT_k$ and $\Div\vv\in H^{k}(K)$ for $BDM_k$.
  
  If we only want to prove convergence in the $L^2$-norm of $U=(u,-\eps\Grad u)$ the interpolation operator $\vJ$ is enough.
  For a stronger convergence result we define a more sophisticated operator, following ideas from \cite{LS11}.
  Recalling the decomposition of $u$, we have for $\vu=-\eps\Grad u$ the decomposition
  \begin{gather}\label{eq:solDecU2}
    \vu=\vs+\vw^1+\vw^2+\vw^3+\vw^4+\vw^{12}+\vw^{23}+\vw^{34}+\vw^{41}
  \end{gather}
  with the obvious definition of the bold font letters. Now the operator $\vI_2$ is defined piecewise
  on each cell $K\subset T_N$ with $id\in\{1,2,3,4,12,23,34,41\}$
  \begin{align*}
    \vI_2 \vs|_K &:= \vJ \vs|_K,&
    \vI_2 \vw^{id}|_K&:=\begin{cases}
                  \vJ \vw^{id}|_K,& K\subset \Omega_{id}^*,\\
                  \hat\vJ^{id} \vw^{id}|_K,&K\subset\Omega_{id}\setminus\Omega_{id}^*,\\
                  0, & K\subset\Omega\setminus\Omega_{id}.
                \end{cases}
  \end{align*}
  Using $\Gamma_{id}:=\pt\Omega_{id}\setminus\Gamma$
  we define the remaining operators on each $K\subset\Omega_{id}\setminus\Omega_{id}^*$
  using the same definition as for $\vJ$ with the exception of the first condition in \eqref{eq:inter:RT}
  and \eqref{eq:inter:BDM}. 
  This one is replaced by the two conditions
  \begin{align*}
    \int_{F}(\hat\vJ^{id} \vw^{id}-\vw^{id})\cdot\vn\cdot q&=0\text{ for each face }F\subset\partial K\setminus\Gamma_{id},\,\forall q\in\PS_k(F),\\
    \int_{F}\hat\vJ^{id} \vw^{id}\cdot\vn\cdot q&=0\text{ for each face }F\subset\partial K\cap\Gamma_{id},\,\forall q\in\PS_k(F).
  \end{align*}
  For our analysis let us define a norm that is associated with $B(\cdot,\cdot)$. Here it holds
  \begin{gather}\label{eq:coercive}
    B(U,U)\geq\min\{1,c_0\}\norm{U}{L^2(\Omega)}^2
  \end{gather}
  that is equivalent to coercivity in the energy norm of the weak formulation of \eqref{eq:original}. But we can actually use the stronger norm
  \[
    \tnorm{U}:=\left( \norm{U}{L^2(\Omega)}^2+\delta\norm{\eps\Div \vu}{L^2(\Omega)}^2 \right)^{1/2},
  \]
  where $\delta\leq \frac{c_0}{c_\infty^2}$. This norm is equivalent to the weighted $H^2$-norm
  $
    \norm{u}{L^2(\Omega)}+\eps\norm{\Grad u}{L^2(\Omega)}+\eps^2\norm{\laplace u}{L^2(\Omega)},
  $
  which is stronger than the energy norm and, unfortunately, also not balanced. To repair this weakness we also introduce 
  a balanced version of this norm
  \[
    \tnorm{U}_{bal}:=\left( \norm{u}{L^2(\Omega)}^2+\eps^{-1}\norm{\vu}{L^2(\Omega)}^2+\delta\eps^{-1}\norm{\eps\Div \vu}{L^2(\Omega)}^2 \right)^{1/2}.
  \]
  The remainder of this section is devoted to proving optimal uniform convergence orders in the balanced norm. 
  Of course, convergence in the unbalanced norm then follows.
  \begin{lemma}\label{lem:infsup}
    For each $\delta \leq \frac{c_0}{c_\infty^2}$ exists a constant $\beta>0$, such that for all $V\in\U_N$ it holds
    \[
      \sup_{\chi\in\U_N}\frac{B(V,\chi)}{\norm{\chi}{L^2(\Omega)}}\geq \beta\tnorm{V}.
    \]
  \end{lemma}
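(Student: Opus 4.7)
The plan is a standard inf-sup argument by test function perturbation. Coercivity \eqref{eq:coercive} already controls $\norm{V}{L^2(\Omega)}^2$ through the diagonal part of $B$, so the only missing ingredient is control of $\norm{\eps\Div\vv}{L^2(\Omega)}^2$. I will therefore, given $V=(v,\vv)\in\U_N$, test against $\chi=V+\tau(\eps\Div\vv,\vecsymb{0})$ for a suitable $\tau>0$, and show (i) $\chi\in\U_N$, (ii) $B(V,\chi)\gtrsim\tnorm{V}^2$, and (iii) $\norm{\chi}{L^2(\Omega)}\lesssim\tnorm{V}$. Dividing and taking the supremum will then yield the claim.

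For (i) it is enough to note that cellwise $\Div:RT_k(K)\to\QS_k(K)$ and $\Div:BDM_k(K)\to\PS_{k-1}(K)\subset\QS_k(K)$, hence $\eps\Div\vv\in\QS_k(K)$ on every $K\subset T_N$, so that $v+\tau\eps\Div\vv$ is an admissible first component in $\U_N$ and the second component is unchanged. This observation is the only discretisation-specific input; everything else is continuous analysis on the form $B$.

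For (ii) a direct computation from \eqref{eq:blf} gives
\[
  B(V,\chi)=\scp{cv,v}+\norm{\vv}{L^2(\Omega)}^2+\tau\eps\scp{cv,\Div\vv}+\tau\norm{\eps\Div\vu}{L^2(\Omega)}^2,
\]
where the skew-symmetric contributions $\eps\scp{\Div\vv,v}-\eps\scp{v,\Div\vv}$ cancel when testing with $V$ itself, and the second-component contributions from the perturbation vanish because its vector part is zero. Using $c\leq c_\infty$ and Young's inequality with weight $1/c_\infty$ on the cross-term,
\[
  \tau\eps|\scp{cv,\Div\vv}|\leq \tau\frac{c_\infty^2}{2}\norm{v}{L^2(\Omega)}^2+\frac{\tau}{2}\norm{\eps\Div\vu}{L^2(\Omega)}^2,
\]
so that choosing $\tau=\delta\leq c_0/c_\infty^2$ leaves $(c_0-\delta c_\infty^2/2)\norm{v}{L^2(\Omega)}^2\geq (c_0/2)\norm{v}{L^2(\Omega)}^2$ and $(\delta/2)\norm{\eps\Div\vu}{L^2(\Omega)}^2$ intact. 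Collecting terms gives $B(V,\chi)\geq \tfrac12\min\{c_0,1\}\tnorm{V}^2$.

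For (iii) the triangle inequality and $\delta^2\norm{\eps\Div\vu}{L^2(\Omega)}^2\leq \delta\,\tnorm{V}^2$ yield $\norm{\chi}{L^2(\Omega)}^2\leq 2\norm{V}{L^2(\Omega)}^2+2\delta^2\norm{\eps\Div\vu}{L^2(\Omega)}^2\leq 2(1+\delta)\tnorm{V}^2\lesssim\tnorm{V}^2$, using $\delta\leq c_0/c_\infty^2\leq 1$. Combining (ii) and (iii) produces $\beta=\tfrac12\min\{c_0,1\}/\sqrt{2(1+\delta)}$ and finishes the proof. The only genuine obstacle is (i) — guessing the correct perturbation and checking that its divergence lands in the discrete first-component space; the remainder is Young's inequality bookkeeping that dictates the sharp condition $\delta\leq c_0/c_\infty^2$ on the norm parameter.
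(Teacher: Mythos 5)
Your proof is correct and is essentially the paper's own argument: your test function $\chi=V+\delta(\eps\Div\vv,\vecsymb{0})$ is exactly the paper's choice $\chi(V)=(v+\delta\eps\Div\vv,\vv)$, followed by the same Young's-inequality bookkeeping to absorb the cross term $\delta\eps\scp{cv,\Div\vv}$ and the same two-sided bound $\norm{\chi}{L^2(\Omega)}\lesssim\tnorm{V}$. Your explicit verification that $\Div\vv|_K\in\QS_k(K)$ (so that $\chi\in\U_N$) is a welcome detail the paper leaves implicit; the side remark that $\delta\le 1$ does not actually follow from $c_0\le c_\infty$ alone, but it only affects the value of the constant $\beta$, not its positivity.
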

  \begin{proof}
    By \eqref{eq:coercive} we already have
    \[
      B(V,V)\geq \min\{1,c_0\}\norm{V}{L^2(\Omega)}^2.
    \]
    Choosing as test function $\chi(V)=(v+\delta\eps\Div\vv,\vv)\in\U_N$, we obtain
    \begin{align*}
      B(V,\chi(V))
        &\geq \left(c_0-\delta\frac{c_\infty^2}{2}\right)\norm{v}{L^2(\Omega)}^2
              +\norm{\vv}{L^2(\Omega)}^2
              +\frac{\delta}{2}\norm{\eps\Div\vv}{L^2(\Omega)}^2
    \end{align*}
    and together with $\delta\leq\frac{c_0}{c_\infty^2}$ we have
    \[
      B(V,\chi(V))\geq \frac{c_0}{2}\norm{v}{L^2(\Omega)}^2
                  +\norm{\vv}{L^2(\Omega)}^2
                  +\frac{\delta}{2}\norm{\eps\Div\vv}{L^2(\Omega)}^2
              \geq\frac{1}{2}\min\{c_0,2\}\tnorm{V}^2.
    \]
    In addition it holds
    \begin{align*}
      \norm{\chi(V)}{L^2(\Omega)}^2
      &\leq 2\norm{V}{L^2(\Omega)}^2+2\delta^2\norm{\eps\Div\vv}{L^2(\Omega)}^2
       \leq 2\max\left\{1,\delta\right\}\tnorm{V}^2.
    \end{align*}
    Thus it follows
    \[
      \sup_{\chi\in\U_N}\frac{B(V,\chi)}{\norm{\chi}{L^2(\Omega)}}
      \geq \frac{B(V,\chi(V))}{\norm{\chi(V)}{L^2(\Omega)}}
      \geq \frac{\sqrt{2}}{4}\frac{\min\{2,c_0\}}{\max\{1,\sqrt{\delta}\}}\tnorm{V}.
    \]
    Setting $\beta=\frac{\sqrt{2}}{4}\frac{\min\{2,c_0\}}{\max\{1,\sqrt{\delta}\}}
                  \geq\frac{\sqrt{2}}{4}\frac{\min\{2,c_0\}}{\max\{1,\frac{\sqrt{c_0}}{c_\infty}\}}
                  >0$ proves the assertion.
  \end{proof}
  Let us split the error $U-U_N$ into an interpolation error and a discrete error
  \[
    U-U_N = U - \I U+\I U-U_N=:(\eta,\veta)-(\xi,\vxi),\,(\xi,\vxi)\in \U_N.
  \]
  Using above inf-sup inequality and the Galerkin orthogonality \eqref{eq:GalOrth} we arrive at
  \begin{gather}\label{eq:infsup}
    \beta\tnorm{(\xi,\vxi)}
      \leq \sup_{V\in\U_N}\frac{B((\xi,\vxi),V)}{\norm{V}{L^2(\Omega)}}
      = \sup_{V\in\U_N}\frac{B((\eta,\veta),V)}{\norm{V}{L^2(\Omega)}},
  \end{gather}
  and we are left with estimating $B((\eta,\veta),V)$ for any $V\in\U_N$. Here it holds
  using \eqref{eq:blf}
  \begin{align*}
    B((\eta,\veta),V)
      &= \scp{c\eta,v}+\eps\scp{\Div\veta,v}+\scp{\veta,\vv}-\eps\scp{\eta,\Div\vv}\\
      &= \eps\scp{\Div\veta,v}+\scp{\veta,\vv}-\eps\scp{\eta,\Div\vv}
  \end{align*}
  due to $\I_1$ being the weighted $L^2$-projection. Note that in the case of constant $c$, 
  the last term would also vanish due to $\Div\vv|_K\in\QS_k(K)$.
  \begin{lemma}\label{lem:est:eta2}
    It holds for $\sigma> k+1$
    \[
      \norm{\veta}{L^2(\Omega)}\lesssim \eps^{1/2}(h+N^{-1}\max|\psi'|)^{k+1}.
    \]
    In the case of $\DS(K)=RT_k(K)$ and $\sigma\geq k+3/2$ we obtain
    \[
      \norm{\Div\veta}{L^2(\Omega)}\lesssim \eps^{-1/2}(h+N^{-1}\max|\psi'|)^{k+1},
    \]
    while for $\DS(K)=BDM_k$ and $\sigma\geq k+1/2$ we have
    \[
      \norm{\Div\veta}{L^2(\Omega)}\lesssim \eps^{-1/2}(h+N^{-1}\max|\psi'|)^{k}.
    \]
  \end{lemma}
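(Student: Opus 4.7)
The plan is to decompose $\vu$ according to \eqref{eq:solDecU2} and to estimate each summand of $\veta$ separately. By the evident symmetry between the four boundary layers and the four corner layers, it suffices to treat the smooth part $\vs$, one boundary layer $\vw^1$, and one corner layer $\vw^{12}$. For the smooth part, $\vI_2\vs=\vJ\vs$ on all of $\Omega$; since $\vs=-\eps\Grad s$ yields $|\pt^\alpha\vs|\lesssim\eps$ for $|\alpha|\leq k+2$, the estimates \eqref{eq:aniso:RT:1} and \eqref{eq:aniso:RT:2} summed cell-by-cell contribute at most $\eps N^{-(k+1)}$, which is strictly dominated by the claimed bound.

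For the boundary layer $\vw^1$ I split $\Omega$ into $\Omega_1^*$, $\Omega_1\setminus\Omega_1^*$ and $\Omega\setminus\Omega_1$. On $\Omega_1^*$ I combine \eqref{eq:aniso:RT:1} with the mesh width bound \eqref{eq:hbound} and the decomposition-derived estimate $|\pt_x^i\pt_y^j\vw^1|\lesssim\eps^{1-i}\e^{-x/\eps}$; on each cell the factor $h_x^{k+1-s}\eps^{-(k-s)}\e^{-x/\eps}$ rearranges into something of the form $(\eps N^{-1}\max|\psi'|)^{k+1-s}\e^{-x(1-(k+1-s)/\sigma)/\eps}$, and summing the resulting geometric series in the stretched variable and integrating in $y$ yields $\eps^{1/2}(N^{-1}\max|\psi'|)^{k+1}$ as soon as $\sigma>k+1$ so that the exponent is strictly negative. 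On $\Omega_1\setminus\Omega_1^*$ the operator $\hat\vJ^1$ differs from $\vJ$ only by fixing some degrees of freedom on $\Gamma_1$, so a standard scaling/unisolvence argument on the reference cell gives the same local approximation estimate as for $\vJ$, and the same computation applies. On $\Omega\setminus\Omega_1$ the interpolant vanishes, so $\veta=\vw^1$ there, and direct integration of $|\vw^1|\lesssim\eps\e^{-x/\eps}$ over $x>\lambda$ gives a bound $\lesssim\eps^{3/2}N^{-\sigma}$, negligible for $\sigma\geq k+1$.

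The divergence estimates follow the same three-region pattern with $\Div\vw^1=-\eps\laplace w_1$, so $|\pt_x^i\pt_y^j\Div\vw^1|\lesssim\eps^{-1-i}\e^{-x/\eps}$. On $\Omega_1^*$ the anisotropic bound \eqref{eq:aniso:RT:2} combined with the same geometric-sum argument delivers $\eps^{-1/2}(N^{-1}\max|\psi'|)^{k+1}$ for $RT_k$ (whose divergence estimate involves only $(k+1)$st pure derivatives of $\Div\vw^1$), while for $BDM_k$ the divergence lies in $\PS_{k-1}$ and the bound degrades by one order. On $\Omega_1\setminus\Omega_1^*$ the same argument as before applies to $\hat\vJ^1$. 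On $\Omega\setminus\Omega_1$ we have $\Div\vI_2\vw^1=0$, and integration gives $\|\Div\vw^1\|_{L^2(\Omega\setminus\Omega_1)}\lesssim\eps^{-1/2}N^{-\sigma}$, which fits into the claimed bound precisely when $\sigma\geq k+3/2$ (respectively $k+1/2$), the extra half-order being needed to absorb the additional $\eps^{-1/2}$ loss from differentiating inside the layer. For the corner layer $\vw^{12}$ the identical decomposition is carried out simultaneously in $x$ and $y$; the bound $|\pt_x^i\pt_y^j\vw^{12}|\lesssim\eps^{1-i-j}\e^{-(x+y)/\eps}$ gains an extra $\eps^{1/2}$ from the $y$-integration, so the corner-layer contributions are strictly smaller than those of the boundary layers.

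The main obstacle is the cell-wise estimate inside $\Omega_1^*$: the width $h_x$ grows exponentially like $\e^{x/(\sigma\eps)}$ while the layer derivatives decay like $\e^{-x/\eps}$, and obtaining the sharp power $(N^{-1}\max|\psi'|)^{k+1}$ requires these two exponentials to combine into a convergent geometric sum in the mesh index, which is exactly where the thresholds on $\sigma$ enter. Once this standard S-mesh bookkeeping is carried out, everything else is routine.
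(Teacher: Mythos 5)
Your overall architecture (decompose via \eqref{eq:solDecU2}, treat $\vs$, one boundary layer and one corner layer, split $\Omega$ into $\Omega_{1}^*$, the transition ply $\Omega_1\setminus\Omega_1^*$ and $\Omega\setminus\Omega_1$) matches the paper, but the step on the transition ply contains a genuine gap. You claim that $\hat\vJ^1$ ``differs from $\vJ$ only by fixing some degrees of freedom on $\Gamma_1$'' and therefore ``a standard scaling/unisolvence argument on the reference cell gives the same local approximation estimate as for $\vJ$.'' This is false: $\hat\vJ^1$ \emph{forces} the normal moments on $F\subset\partial K\cap\Gamma_1$ to zero instead of matching those of $\vw^1$, so it does not reproduce polynomials in $\DS_k(K)$ and the Bramble--Hilbert/scaling machinery behind \eqref{eq:aniso:RT:1}--\eqref{eq:aniso:RT:2} does not apply to it. The correct argument (as in the paper) writes $\hat\vJ^1=\vJ+\vP^1$ and observes that $\vP^1\vw^1$ is determined entirely by the trace $\vw^1\cdot\vn|_{\Gamma_1}$, which is of size $N^{-\sigma}$ because $\Gamma_1$ sits at distance $\lambda=\sigma\eps\ln N$ from the boundary; this gives $\norm{\vP^1\vw^1}{L^2(\Omega_1\setminus\Omega_1^*)}\lesssim h_{N/4}^{1/2}N^{-\sigma}$, and for the divergence one needs the additional observations that $(\vP^1\vw^1)_2=0$ and that an inverse inequality costs a factor $h_{N/4}^{-1}\lesssim\eps^{-1}N$, yielding $\eps^{-1/2}N^{1/2}N^{-\sigma}$.

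This omission also explains why your accounting of the thresholds is off: the conditions $\sigma\geq k+3/2$ (for $RT_k$) and $\sigma\geq k+1/2$ (for $BDM_k$) are \emph{not} needed to absorb $\norm{\Div\vw^1}{L^2(\Omega\setminus\Omega_1)}\lesssim\eps^{-1/2}N^{-\sigma}$ --- there $\sigma\geq k+1$ (resp.\ $\sigma\geq k$) already suffices --- but come precisely from the inverse-inequality bound $\eps^{-1/2}N^{1/2-\sigma}\leq\eps^{-1/2}N^{-(k+1)}$ on the transition ply, i.e.\ from the very term your argument suppresses. A smaller but real slip: your derivative bound $|\pt_x^i\pt_y^j\vw^1|\lesssim\eps^{1-i}\e^{-x/\eps}$ holds only for the second component; the first component of $\vw^1=-\eps\Grad w_1$ satisfies $|\pt_x^i\pt_y^j(\vw^1)_1|\lesssim\eps^{-i}\e^{-x/\eps}$, and it is this component that produces the $\eps^{1/2}$ (rather than $\eps^{3/2}$) in the stated bound, so carrying your bound through consistently would give the wrong power of $\eps$. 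The geometric-sum bookkeeping on $\Omega_1^*$ and the treatment of $\vs$, of $\Omega\setminus\Omega_1$ and of the corner layers are otherwise in line with the paper.
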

  \begin{proof}
    Using the solution decomposition \eqref{eq:solDecU2} and the anisotropic interpolation error estimate \eqref{eq:aniso:RT:1}
    we obtain
    \[
      \norm{\vI_2\vs-\vs}{L^2(\Omega)}
        = \norm{\vJ\vs-\vs}{L^2(\Omega)}
        \lesssim (h+N^{-1})^{k+1}\norm{\vs}{H^{k+1}(\Omega)}
        \lesssim \eps(h+N^{-1})^{k+1}.
    \]
    For the boundary layer terms we use the special structure of $\vI_2$ and estimate differently on the subdomains of $\Omega$.
    We show the procedure for $\vw^1$, the estimates of the other terms follow similarly.
    In $\Omega\setminus\Omega_1$ the interpolant is zero and we get
    \[
      \norm{\vI_2\vw^1-\vw^1}{L^2(\Omega\setminus\Omega_1)}
        = \norm{\vw^1}{L^2(\Omega\setminus\Omega_1)}
        \lesssim \eps^{1/2} N^{-\sigma}.
    \]
    In $\Omega_1$ we have
    \[
      \norm{\vI_2\vw^1-\vw^1}{L^2(\Omega_1)}
        \leq \norm{\vJ\vw^1-\vw^1}{L^2(\Omega_1)}+\norm{\vP^1\vw^1}{L^2(\Omega_1\setminus\Omega_1^*)},
    \]
    where $\vP^1:=\hat\vJ^1-\vJ$.
    For the first term we obtain using \eqref{eq:aniso:RT:1} and \eqref{eq:hbound}
    \begin{align*}
      \norm{\vJ\vw^1-\vw^1}{L^2(\Omega_1)}^2
        &=\sum_{K\subset\Omega_1}\sum_{\ell=0}^{k+1}\eps^{2(k+1-\ell)}(N^{-1}\max|\psi'|)^{2(k+1-\ell)}N^{-2\ell}\norm{\e^{\frac{(k+1-\ell)x}{\sigma\eps}}\eps^{-(k+1-\ell)}\e^{-\frac{x}{\eps}}}{L^2(K)}^2\\
        &\lesssim (N^{-1}\max|\psi'|)^{2(k+1)}\norm{\e^{\frac{(k+1-\sigma)x}{\sigma\eps}}}{L^2(\Omega_1)}^2\\
        &\lesssim (N^{-1}\max|\psi'|)^{2(k+1)}\eps,
    \end{align*}
    due to $\sigma>k+1$. In the remaining ply of elements the operator $\vP^1$ in the Raviart-Thomas case is given by
    \begin{align*}
      \int_{F}\vP^1 \vw^1\cdot\vn\cdot q&=0\text{ for all faces }F\subset\partial K\setminus\Gamma_1,\,\forall q\in\PS_k(F),\\
      \int_{F}\vP^1\vw^1\cdot\vn\cdot q&=\int_{F}\vw^1\cdot\vn\cdot q\text{ for all faces }F\subset\partial K\cap\Gamma_1,\,\forall q\in\PS_k(F),\\
      \int_K(\vP^1\vw^1)\cdot \vq&=0,\,\forall\vq\in\QS_{k-1,k}(K)\times\QS_{k,k-1}(K),
    \end{align*}
    and similarly for the other finite elements. Thus $\vP^1\vw^1$ depends only on $\vw^1\cdot\vn|_{\Gamma_1}$. 
    With $\vP^1$ on $\Gamma_1$ being defined by weighted integrals, we have  
    \begin{gather}\label{eq:P1w1}
      \norm{\vP^1\vw^1}{L^2(\Omega_1\setminus\Omega_1^*)}
        \lesssim \meas(\Omega_1\setminus\Omega_1^*)^{1/2}\norm{\vw^1\cdot\vn}{L^\infty(\Gamma_1)}
        \lesssim h_{N/4}^{1/2}N^{-\sigma}
        \lesssim \eps^{1/2}N^{-(k+1)}.
    \end{gather}
    Applying the same techniques to the other boundary and corner layer terms, and collecting the result finishes the first part of the proof.

    For the divergence we can apply the same techniques with the difference of applying 
    \eqref{eq:aniso:RT:2} instead of \eqref{eq:aniso:RT:1}. We obtain for $\sigma> k+1$ and $\DS(K)=RT_k(K)$ 
    \begin{align*}
      \norm{\Div(\vI_2\vs-\vs)}{L^2(\Omega)}
        &\lesssim (h+N^{-1})^{k+1}\norm{\Div\vs}{H^{k+1}(\Omega)}
        \lesssim \eps(h+N^{-1})^{k+1},\\
      \norm{\Div(\vI_2\vw^1-\vw^1)}{L^2(\Omega\setminus\Omega_1)}
        &= \norm{\Div\vw^1}{L^2(\Omega\setminus\Omega_1)}
        \lesssim \eps^{-1/2} N^{-\sigma},\\
      \norm{\Div(\vJ\vw^1-\vw^1)}{L^2(\Omega_1)}
        &\lesssim \eps^{-1}(N^{-1}\max|\psi'|)^{k+1}\norm{\e^{\frac{(k+1-\sigma)x}{\sigma\eps}}}{L^2(\Omega_1)}\\
        &\lesssim \eps^{-1/2}(N^{-1}\max|\psi'|)^{k+1}.
    \end{align*}
    The last term to estimate is the error on the ply of elements in $\Omega_1\setminus\Omega_1^*$.
    A closer inspection of $\vP^1\vw^1$ reveals $(\vP^1\vw^1)_2=0$.
    Thus, an inverse inequality followed by \eqref{eq:P1w1} yields
    \begin{align*}
      \norm{\Div(\vP^1\vw^1)}{L^2(\Omega_1\setminus\Omega_1^*)}
        &\lesssim h_{N/4}^{-1} \norm{(\vP^1\vw^1)_1}{L^2(\Omega_1\setminus\Omega_1^*)}
        \lesssim h_{N/4}^{-1/2} \norm{(\vw^1)_1}{L^\infty(\Gamma_1)}
        \lesssim \eps^{-1/2}N^{1/2}N^{-\sigma},
    \end{align*}
    where $h_{N/4}\geq h_{min}\geq \eps N^{-1}$ holds due to the assumptions on $\phi$. 
    The analysis for the other terms of the decomposition follows the same lines.
    
    For $\DS(K)=BDM_k(K)$
    the same analysis can be done, only replacing the convergence orders by $k$ for $\sigma\geq k+1/2$.
  \end{proof}

  \begin{lemma}\label{lem:est:eta1div}
    Assuming $h\eps\lesssim N^{-2}$
    and $\sigma\geq k+1$, it holds for any $V=(v,\vv)\in\U_N$
    \[
      |\scp{\eta,\Div\vv}|\lesssim\eps^{-1/2}(h+N^{-1}\max|\psi'|)^{k+1}(\ln N)^{1/2}\norm{\vv}{L^2(\Omega)}.
    \]
  \end{lemma}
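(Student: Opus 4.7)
The plan is to exploit the weighted $L^2$-orthogonality of $\I_1$ cellwise. Since $\Div\vv|_K\in\QS_k(K)$ for both the $RT_k$ and the $BDM_k$ case, fixing a reference point $\bar x_K\in K$ and setting $c_K:=c(\bar x_K)$, the function $c_K^{-1}\Div\vv|_K$ still lies in $\QS_k(K)$, whence
\[
  \scp{\eta,\Div\vv}_K=-c_K^{-1}\scp{(c-c_K)\eta,\Div\vv}_K.
\]
The smoothness $c\in W^{1,\infty}$ gives $\|c-c_K\|_{L^\infty(K)}\lesssim h_K$; combining this with a cellwise Cauchy--Schwarz and the componentwise inverse inequalities $\|\pt_x v_1\|_{L^2(K)}\lesssim h_{x,K}^{-1}\|v_1\|_{L^2(K)}$ (and analogously in $y$) produces
\[
  |\scp{\eta,\Div\vv}_K|\lesssim h_K\,\|\eta\|_{L^2(K)}\,\bigl(h_{x,K}^{-1}+h_{y,K}^{-1}\bigr)\,\|\vv\|_{L^2(K)}.
\]

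Summing over $K\subset T_N$ and applying Cauchy--Schwarz in the cell index, the task reduces to bounding $\bigl(\sum_K h_K^2(h_{x,K}^{-2}+h_{y,K}^{-2})\|\eta\|_{L^2(K)}^2\bigr)^{1/2}$ by $\eps^{-1/2}(h+N^{-1}\max|\psi'|)^{k+1}(\ln N)^{1/2}$. I would plug in the solution decomposition $u=s+\sum_i w_i+\sum_{ij}w_{ij}$ and apply \eqref{eq:aniso:L2} to each piece separately. The smooth part $s$ produces, via $\ell=k+1$, a clean $O(N^{-(k+1)})$ contribution. For a layer term like $w_1$, cells outside $\Omega_1$ contribute only $N^{-\sigma}\lesssim N^{-(k+1)}$ thanks to $\sigma\geq k+1$, while on cells inside $\Omega_1$ the pointwise bound $|\pt_x^i w_1|\lesssim\eps^{-i}\e^{-x/\eps}$ combined with $h_{x,K}^{k+1}\eps^{-(k+1)}\lesssim(N^{-1}\max|\psi'|)^{k+1}\e^{(k+1)x/(\sigma\eps)}$ keeps the $x$-derivatives under control. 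Corner layer pieces $w_{ij}$ are handled analogously, with both mesh widths small.

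The main technical obstacle is the potentially large ratio $h_K/h_{x,K}\sim h_{y,K}/h_{x,K}$ in anisotropic layer cells of $\Omega_1$. Two ingredients save the day: the estimate $\|\e^{-x/\eps}\|_{L^2(K)}\lesssim\eps^{1/2}h_{y,K}^{1/2}\e^{-x_{i-1}/\eps}$ carries an explicit $\eps^{1/2}$, which after the inverse-inequality scaling in $x$ and the square-rooting produces exactly one factor $\eps^{-1/2}$; and the geometric-type sum $\sum_{i=1}^{N/4}\e^{-2x_{i-1}/\eps}$ over layer columns is of order $N/\ln N$ on S-type meshes (via the properties of $\phi$), whose square root, combined with the rate $(N^{-1}\max|\psi'|)^{k+1}$, yields the additional $(\ln N)^{1/2}$. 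The assumption $h\eps\lesssim N^{-2}$ is invoked when handling the smooth part $s$ on the coarse-side cells adjacent to the transition where $h_K$ is largest: exactly this hypothesis absorbs the inverse scaling without losing the clean $(k+1)$-th order, so that the final assembly of all pieces delivers the announced bound.
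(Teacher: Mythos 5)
Your proposal follows essentially the same route as the paper's proof: the weighted-$L^2$-orthogonality trick with a piecewise constant approximation $c_K$ of $c$, the bound $\norm{c-c_K}{L^\infty(K)}\lesssim h_x+h_y$, an inverse inequality for $\Div\vv$, and a term-by-term summation over the solution decomposition via \eqref{eq:aniso:L2}; your lumped cellwise bound $(h_x+h_y)(h_x^{-1}+h_y^{-1})\norm{\eta}{L^2(K)}\norm{\vv}{L^2(K)}$ is precisely the sum of the paper's two directional estimates for $\pt_x\vv_1$ and $\pt_y\vv_2$, so nothing is lost. The one imprecision is where you locate the use of $h\eps\lesssim N^{-2}$: the coarse cells adjacent to the transition point are harmless, and the hypothesis is really needed on the cells inside the layer and corner regions, where the aspect ratio can reach $hN/\eps$ and where it supplies $hN\lesssim 1$ and $h\eps N^2\lesssim 1$ to absorb factors like $Nh^{k+1}$ against $(h+N^{-1}\max|\psi'|)^{k+1}$ --- exactly the role it plays in the paper's treatment of $\Omega_{cor}$ and of the $y$-derivative terms of $w_1$.
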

  \begin{proof}
    Let $c_K:=\frac{1}{\meas(K)}\int_K c\geq c_0$ be a piecewise constant approximation of $c$. Now
    \begin{align*}
      \scp{\eta,\Div\vv}
        &= \sum_{K\in T_N}\scp{\eta,\Div\vv}_K
         = \sum_{K\in T_N}\frac{1}{c_K}\scp{(c_K-c)\eta,\Div\vv}_K,
    \end{align*}
    due to the weighted $L^2$-projection and $\Div\vv|_K\in \QS_k(K)$. It holds
    \[
      \norm{c_K-c}{L^\infty(K)}\lesssim (h_x+h_y)\norm{c}{W^{1,\infty}(K)}.
    \]
    Thus we obtain, using $h_x$ and $h_y$ as abbreviations for the dimensions of $K$, an inverse inequality
    and \eqref{eq:aniso:L2} for any $v\in H^{k+1}(K)$
    \begin{align*}
      |&\scp{(c_K-c)(v-\I_1 v),\pt_x \vv_1}_K|\\
        &\lesssim \left(1+\frac{h_y}{h_x}\right)\norm{v-\I_1 v}{L^2(K)}\norm{\vv}{L^2(K)}\\
        &\lesssim \left((h_x+h_y)\norm{h_x^{k}\pt_x^{k+1}v}{L^2(K)}+\left(1+\frac{h_y}{h_x}\right)\norm{h_y^{k+1}\pt_y^{k+1}v}{L^2(K)}\right)\norm{\vv}{L^2(K)}
    \end{align*}
    and similarly for the $y$-derivative of the second component.
    
    Let us start with the smooth part $s$ of the solution decomposition and denote the coarse part of $\Omega$ 
    by $\Omega_c:=\Omega\setminus\bigcup_{i=1}^4\Omega_i$ and the union of corners by $\Omega_{cor}:= \Omega_{12}\cup\Omega_{23}\cup\Omega_{34}\cup\Omega_{41}$.
    Then we obtain
    \begin{align*}
      |\scp{(c_K-c)(s-\I_1 s),\pt_x \vv_1}|
        \lesssim &\Big((h+N^{-1})^{k+1}\norm{\pt_x^{k+1}s}{L^2(\Omega)}
                                   +N^{-(k+1)}\norm{\pt_y^{k+1}s}{L^2(\Omega_c)}\\
                                  &+\eps^{-1}N^{-(k+1)}\norm{\pt_y^{k+1}s}{L^2((\Omega_1\cup\Omega_3)\setminus\Omega_{cor})}\\
                                  &+hNh^{k+1}\norm{\pt_y^{k+1}s}{L^2((\Omega_2\cup\Omega_4)\setminus\Omega_{cor})}\\
                                  &+ Nh^{k+1}\norm{\pt_y^{k+1}s}{L^2(\Omega_{cor})}
                                  \Big)\norm{\vv}{L^2(\Omega)}\\
        \lesssim &\eps^{-1/2}(h+N^{-1})^{k+1}\left(1+h\eps N^2\right)(\ln N)^{1/2}\norm{\vv}{L^2(\Omega)}\\
        \lesssim &\eps^{-1/2}(h+N^{-1})^{k+1}(\ln N)^{1/2}\norm{\vv}{L^2(\Omega)}
    \end{align*}
    due to $h_{min}\geq\eps^{-1}N$ and the condition on $h\eps$. In the final estimate on $\Omega_{cor}$
    we have used
    \[
      Nh^{k+1}
        \leq hN(h+N^{-1})^k
        = hN^2N^{-1}(h+N^{-1})^k
        \leq hN^2(h+N^{-1})^{k+1}.
    \]
    For $\pt_y \vv_2$ holds a similar estimate due to symmetry.
    Next we look at the boundary layer term $w_1$. We obtain in $\Omega_1$
    \begin{align*}
      |&\scp{(c_K-c)(w_1-\I_1 w_1),\pt_x \vv_1}_{\Omega_1}|\\
        &\lesssim \bigg((h+N^{-1}\max|\psi'|)^{k+1}\eps^{-1}\norm{\e^{\frac{kx}{\sigma\eps}}\e^{-\frac{x}{\eps}}}{L^2(\Omega_1)}+\\&\hspace*{2cm}
                                                   \eps^{-1}N^{-(k+1)}\norm{\e^{-\frac{x}{\eps}}}{L^2(\Omega_1\setminus\Omega_{cor})}+
                                                   Nh^{k+1}\norm{\e^{-\frac{x}{\eps}}}{L^2(\Omega_1\cap\Omega_{cor})}\bigg)\norm{\vv}{L^2(\Omega_1)}\\
        &\lesssim \eps^{-1/2}(h+N^{-1}\max|\psi'|)^{k+1}\left(1+h\eps N^2\right)\norm{\vv}{L^2(\Omega_1)}\\
        &\lesssim \eps^{-1/2}(h+N^{-1}\max|\psi'|)^{k+1}\norm{\vv}{L^2(\Omega_1)},
    \end{align*}
    using again the condition on $\eps h$. Now for the $y$-derivative it holds
    \begin{align*}
      |&\scp{(c_K-c)(w_1-\I_1 w_1),\pt_y \vv_2}_{\Omega_1}|\\
        &\lesssim \bigg((h+N^{-1})^{k+1}\norm{\e^{-\frac{x}{\eps}}}{L^2(\Omega_1)}+\\
                        &\hspace{2cm} (1+hN)(N^{-1}\max|\psi'|)^{k+1}\norm{\e^{\frac{(k+1)x}{\eps}}\e^{-\frac{x}{\eps}}}{L^2(\Omega_1\setminus\Omega_{cor})}+\\
                        &\hspace{2cm} (1+h\eps^{-1}N)(N^{-1}\max|\psi'|)^{k+1}\norm{\e^{\frac{(k+1)x}{\eps}}\e^{-\frac{x}{\eps}}}{L^2(\Omega_1\cap\Omega_{cor})}\bigg)\norm{\vv}{L^2(\Omega_1)}\\
        &\lesssim \eps^{-1/2}(h+N^{-1}\max|\psi'|)^{k+1}(\eps+h\eps N(\ln N)^{1/2}+h\eps^{3/2}N\ln N)\norm{\vv}{L^2(\Omega_1)}\\
        &\lesssim \eps^{-1/2}(h+N^{-1}\max|\psi'|)^{k+1}\norm{\vv}{L^2(\Omega_1)}.
    \end{align*}
    In the remainder of the domain we apply the $L^\infty$-stability and get
    \begin{align*}
      |\scp{(c_K-c)(w_1-\I_1 w_1),\pt_x \vv_1}_{\Omega\setminus\Omega_1}|
        &\lesssim N^{-\sigma}(1+Nh\meas(\Omega\setminus\Omega_1\setminus\Omega_c)^{1/2})\norm{\vv}{L^2(\Omega\setminus\Omega_1)}\\
        &\lesssim \eps^{-1/2}N^{-\sigma}(\eps^{1/2}+h\eps N(\ln N)^{1/2})\norm{\vv}{L^2(\Omega\setminus\Omega_1)}\\
        &\lesssim \eps^{-1/2}N^{-(k+1)}\norm{\vv}{L^2(\Omega\setminus\Omega_1)},\\
      |\scp{(c_K-c)(w_1-\I_1 w_1),\pt_y(\vv)_2}_{\Omega\setminus\Omega_1}|
        &\lesssim N^{-\sigma}(1+\eps^{-1}\meas(\Omega\setminus\Omega_1\setminus\Omega_c)^{1/2})\norm{\vv}{L^2(\Omega\setminus\Omega_1)}\\
        &\lesssim \eps^{-1/2}N^{-(k+1)}(\ln N)^{1/2}\norm{\vv}{L^2(\Omega\setminus\Omega_1)}.
    \end{align*}
    The estimation of the other boundary layer terms and of the corner layer terms is similar. Combining all the 
    individual results proves the assertion.
  \end{proof}
  \begin{lemma}\label{lem:discreteerror}
    For $h\eps\lesssim N^{-2}$ it holds for $\DS(K)=RT_k(K)$ with $\sigma\geq k+3/2$
    \[
      \tnorm{(\xi,\vxi)}_{bal}\lesssim (h+N^{-1}\max|\psi'|)^{k+1}(\ln N)^{1/2}
    \]
    and for $\DS(K)=BDM_k(K)$ with $\sigma\geq k+1$
    \[
      \tnorm{(\xi,\vxi)}_{bal}\lesssim (h+N^{-1}\max|\psi'|)^{k}.
    \]
  \end{lemma}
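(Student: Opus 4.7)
The plan is to combine the inf-sup inequality of Lemma~\ref{lem:infsup} with Galerkin orthogonality to control the discrete error $(\xi,\vxi)$ by the interpolation error $(\eta,\veta)$, and then to invoke Lemmas~\ref{lem:est:eta2} and~\ref{lem:est:eta1div} for each of the three remaining terms in $B((\eta,\veta),V)$.

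First I would start from \eqref{eq:infsup}, which already incorporates Galerkin orthogonality and the simplification coming from $\I_1$ being the weighted $L^2$-projection, so that
\[
  B((\eta,\veta),V)=\eps\scp{\Div\veta,v}+\scp{\veta,\vv}-\eps\scp{\eta,\Div\vv}
\]
for any $V=(v,\vv)\in\U_N$. For the first two terms a Cauchy--Schwarz inequality combined with the bounds on $\norm{\veta}{L^2(\Omega)}$ and $\norm{\Div\veta}{L^2(\Omega)}$ from Lemma~\ref{lem:est:eta2} suffices; the last term is exactly what Lemma~\ref{lem:est:eta1div} bounds. In the $RT_k$ case, the hypotheses $\sigma\geq k+3/2$ and $h\eps\lesssim N^{-2}$ of those lemmas are exactly the assumptions of the present statement, so that after pulling out $\norm{V}{L^2(\Omega)}$ each of the three contributions carries the common factor $\eps^{1/2}(h+N^{-1}\max|\psi'|)^{k+1}$, with an additional $(\ln N)^{1/2}$ produced only by the third term.

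Taking the supremum over $V\in\U_N$ and applying Lemma~\ref{lem:infsup} then yields
\[
  \tnorm{(\xi,\vxi)}\lesssim \eps^{1/2}(h+N^{-1}\max|\psi'|)^{k+1}(\ln N)^{1/2}.
\]
The last step is to translate this unbalanced estimate into a balanced one. This is purely algebraic: squaring and using $\eps\leq 1$ I obtain $\norm{\xi}{L^2(\Omega)}^2\lesssim\eps(\cdots)^2\ln N$, $\eps^{-1}\norm{\vxi}{L^2(\Omega)}^2\lesssim(\cdots)^2\ln N$, and $\delta\eps^{-1}\norm{\eps\Div\vxi}{L^2(\Omega)}^2=\delta\eps\norm{\Div\vxi}{L^2(\Omega)}^2\lesssim(\cdots)^2\ln N$, so that the three balanced ingredients assemble to the desired bound. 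Crucially, the $\eps^{1/2}$ sitting in front of the unbalanced bound precisely absorbs the $\eps^{-1/2}$ weighting of $\vxi$ in the balanced norm; this is the reason the strategy works at all.

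For the $BDM_k$ case I would repeat exactly the same argument, with the sole difference that Lemma~\ref{lem:est:eta2} now only yields $\norm{\Div\veta}{L^2(\Omega)}\lesssim\eps^{-1/2}(h+N^{-1}\max|\psi'|)^{k}$, so that the $\eps\scp{\Div\veta,v}$ term drops one order and dominates the other two; the logarithmic factor disappears because this dominating term does not carry one. The combined condition $\sigma\geq k+1$ is the strictest of those required by the two auxiliary lemmas in this case and matches the hypothesis. I do not foresee any genuine obstacle beyond careful bookkeeping of the $\eps$-powers and verification that the $\sigma$ thresholds indeed align.
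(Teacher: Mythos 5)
Your proposal is correct and follows essentially the same route as the paper: the inf-sup bound \eqref{eq:infsup}, Cauchy--Schwarz with Lemma~\ref{lem:est:eta2} for the terms $\eps\scp{\Div\veta,v}$ and $\scp{\veta,\vv}$, Lemma~\ref{lem:est:eta1div} for $\eps\scp{\eta,\Div\vv}$, and then the observation that the factor $\eps^{1/2}$ in the resulting unbalanced bound converts componentwise into the $\eps^{-1/2}$ weights of the balanced norm. The BDM case is likewise handled as in the paper, where the reduction to order $k$ without the logarithm rests on $(h+N^{-1}\max|\psi'|)(\ln N)^{1/2}\lesssim 1$, which you invoke implicitly.
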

  \begin{proof}
    Using the inf-sup estimate \eqref{eq:infsup} and the previous lemmas we obtain for $\DS(K)=RT_k(K)$
    \[
      \beta\tnorm{(\xi,\vxi)}
        \leq \sup_{V\in\U_N}\frac{B((\eta,\veta),V)}{\norm{V}{L^2(\Omega)}}
        \lesssim \eps^{1/2}(h+N^{-1}\max|\psi'|)^{k+1}(\ln N)^{1/2},
    \]
    particularly
    \begin{align*}
      \norm{\xi}{L^2(\Omega)}
        &\lesssim \eps^{1/2}(h+N^{-1}\max|\psi'|)^{k+1}(\ln N)^{1/2},\\
      \eps^{-1/2}\norm{\vxi}{L^2(\Omega)}
        &\lesssim (h+N^{-1}\max|\psi'|)^{k+1}(\ln N)^{1/2},\\
      \delta^{1/2}\eps^{-1/2}\norm{\eps\Div \vxi}{L^2(\Omega)}
        &\lesssim (h+N^{-1}\max|\psi'|)^{k+1}(\ln N)^{1/2},
    \end{align*}
    which are the three components of $\tnorm{\xi}_{bal}$. Similar results follow for 
    $\DS(K)=BDM_k(K)$ with the additional
    \[
      (h+N^{-1}\max|\psi'|)(\ln N)^{1/2}\lesssim 1.\qedhere
    \]
  \end{proof}
  \begin{theorem}\label{thm:main}
    For $h\eps\lesssim N^{-2}$ it holds for $\DS(K)=RT_k(K)$ with $\sigma\geq k+3/2$
    \[
      \tnorm{U-U_N}_{bal}\lesssim (h+N^{-1}\max|\psi'|)^{k+1}(\ln N)^{1/2}.
    \]
    and for $\DS(K)=BDM_k(K)$ with $\sigma\geq k+1$
    \[
      \tnorm{U-U_N}_{bal}\lesssim (h+N^{-1}\max|\psi'|)^{k}.
    \]
  \end{theorem}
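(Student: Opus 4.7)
The strategy is the standard triangle-inequality split $U-U_N=(\eta,\veta)-(\xi,\vxi)$ already introduced before Lemma~\ref{lem:est:eta2}, combined with the discrete error bound of Lemma~\ref{lem:discreteerror} and a separate estimate of the interpolation error in the balanced norm. Since Lemma~\ref{lem:discreteerror} already delivers $\tnorm{(\xi,\vxi)}_{bal}$ with exactly the claimed rates, the only remaining work is to control
\[
\tnorm{(\eta,\veta)}_{bal}^2=\norm{\eta}{L^2(\Omega)}^2+\eps^{-1}\norm{\veta}{L^2(\Omega)}^2+\delta\eps\norm{\Div\veta}{L^2(\Omega)}^2
\]
by $(h+N^{-1}\max|\psi'|)^{2(k+1)}$ for $RT_k$, and one order less for $BDM_k$.

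The second and third summands follow immediately from Lemma~\ref{lem:est:eta2}: the bound $\norm{\veta}{L^2(\Omega)}\lesssim\eps^{1/2}(h+N^{-1}\max|\psi'|)^{k+1}$ yields $\eps^{-1/2}\norm{\veta}{L^2(\Omega)}\lesssim(h+N^{-1}\max|\psi'|)^{k+1}$, and the divergence estimate gives $\eps^{1/2}\norm{\Div\veta}{L^2(\Omega)}$ in the same order (respectively order $k$ for $BDM_k$), with no logarithmic factor. For the first summand $\norm{\eta}{L^2(\Omega)}=\norm{\I_1 u-u}{L^2(\Omega)}$ I would use the solution decomposition of $u$, the anisotropic estimate~\eqref{eq:aniso:L2}, the mesh-size bound~\eqref{eq:hbound}, and the $L^\infty$-stability of $\I_1$, exactly in the style of the proof of Lemma~\ref{lem:est:eta2}: the smooth part contributes $(h+N^{-1})^{k+1}$, each layer strip $\Omega_{id}$ contributes $(N^{-1}\max|\psi'|)^{k+1}\norm{\e^{(k+1-\sigma)x/(\sigma\eps)}}{L^2(\Omega_{id})}\lesssim (N^{-1}\max|\psi'|)^{k+1}$ provided $\sigma>k+1$, and on the complement of each layer region $L^\infty$-stability gives $N^{-\sigma}\lesssim N^{-(k+1)}$. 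Summation yields $\norm{\eta}{L^2(\Omega)}\lesssim(h+N^{-1}\max|\psi'|)^{k+1}$.

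The main obstacle is not any single estimate but the bookkeeping: I must check that every contribution to $\tnorm{(\eta,\veta)}_{bal}$ is dominated by the discrete-error bound from Lemma~\ref{lem:discreteerror}, so the final estimate inherits only the single $(\ln N)^{1/2}$ factor coming from the $\scp{\eta,\Div\vv}$ term in Lemma~\ref{lem:est:eta1div}. Once this is verified, applying the triangle inequality $\tnorm{U-U_N}_{bal}\leq\tnorm{(\eta,\veta)}_{bal}+\tnorm{(\xi,\vxi)}_{bal}$ finishes the proof for both choices of $\DS(K)$.
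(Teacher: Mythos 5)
Your proposal is correct and follows essentially the same route as the paper: the triangle inequality $\tnorm{U-U_N}_{bal}\leq\tnorm{(\xi,\vxi)}_{bal}+\tnorm{(\eta,\veta)}_{bal}$, Lemma~\ref{lem:discreteerror} for the discrete part, Lemma~\ref{lem:est:eta2} for the $\eps^{-1/2}\norm{\veta}{L^2(\Omega)}$ and $\eps^{1/2}\norm{\Div\veta}{L^2(\Omega)}$ contributions, and a decomposition-plus-anisotropic-estimate argument for $\norm{\eta}{L^2(\Omega)}\lesssim(h+N^{-1}\max|\psi'|)^{k+1}$, which the paper dismisses as ``standard techniques'' but which you correctly spell out. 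Your bookkeeping of where the single $(\ln N)^{1/2}$ factor enters (only through the discrete error via Lemma~\ref{lem:est:eta1div}) also matches the paper.
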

  \begin{proof}
    With the triangle inequality
    \[
      \tnorm{U-U_N}_{bal}\leq \tnorm{(\xi,\vxi)}_{bal}+\tnorm{(\eta,\veta)}_{bal}
    \]
    and the previous lemmas it only remains to estimate $\norm{\eta}{L^2(\Omega)}$, which can
    be done using the local anisotropic interpolation error estimates \eqref{eq:aniso:L2}
    by standard techniques 
    \[
      \norm{\eta}{L^2(\Omega)}
        \lesssim (h+N^{-1}\max|\psi'|)^{k+1}.\qedhere
    \]
  \end{proof}
  \begin{corollary}
    Under the same conditions as the previous theorem we also have for $\DS(K)=RT_k(K)$
    in the unbalanced norm
    \[
      \tnorm{U-U_N}\lesssim (h+N^{-1}\max|\psi'|)^{k+1}.
    \]
  \end{corollary}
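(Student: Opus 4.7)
The plan is to reuse the inf-sup argument from Lemma~\ref{lem:discreteerror} directly in the unbalanced norm. Inside that proof, the chain
\[
\beta\tnorm{(\xi,\vxi)} \leq \sup_{V \in \U_N}\frac{B((\eta,\veta),V)}{\norm{V}{L^2(\Omega)}} \lesssim \eps^{1/2}(h+N^{-1}\max|\psi'|)^{k+1}(\ln N)^{1/2}
\]
already bounds the \emph{unbalanced} $\tnorm{\cdot}$; the rescalings by $\eps^{-1/2}$ that follow only serve to re-express the individual components of the left-hand side as contributions to $\tnorm{\cdot}_{bal}$. So one already has the estimate for $\tnorm{(\xi,\vxi)}$ with a prefactor $\eps^{1/2}(\ln N)^{1/2}$ in front of the optimal power of $h+N^{-1}\max|\psi'|$.

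Accordingly, I would split the error via the triangle inequality
\[
\tnorm{U-U_N} \leq \tnorm{(\eta,\veta)} + \tnorm{(\xi,\vxi)}
\]
and handle the two pieces separately. For the interpolation part, Lemma~\ref{lem:est:eta2} delivers both $\norm{\veta}{L^2(\Omega)}$ and $\eps\norm{\Div\veta}{L^2(\Omega)}$ of size $\eps^{1/2}(h+N^{-1}\max|\psi'|)^{k+1}$, while the standard estimate $\norm{\eta}{L^2(\Omega)} \lesssim (h+N^{-1}\max|\psi'|)^{k+1}$ (invoked at the end of the proof of Theorem~\ref{thm:main}) covers the first component. With $\eps\leq 1$ these combine to $\tnorm{(\eta,\veta)} \lesssim (h+N^{-1}\max|\psi'|)^{k+1}$ without any logarithmic factor -- every $\vu$-contribution already carries the favourable weight $\eps^{1/2}$.

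The only nontrivial point is to absorb the prefactor $\eps^{1/2}(\ln N)^{1/2}$ in front of the discrete error. The mesh-width lower bound $h \gtrsim \eps N^{-1}$, enforced by the condition $\min_i(\phi(2i/N)-\phi(2(i-1)/N)) \geq CN^{-1}$ on the mesh-generating function, combined with the standing hypothesis $h\eps \lesssim N^{-2}$ yields $\eps^2 \lesssim N^{-1}$, that is $\eps\lesssim N^{-1/2}$, and therefore $\eps\ln N \lesssim N^{-1/2}\ln N \lesssim 1$. Thus $\eps^{1/2}(\ln N)^{1/2}\lesssim 1$, the discrete error satisfies $\tnorm{(\xi,\vxi)} \lesssim (h+N^{-1}\max|\psi'|)^{k+1}$, and adding the two contributions gives the corollary. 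I expect this last step -- extracting an $\eps$-bound from the mesh assumptions -- to be the only place where some care is needed, but it is really just bookkeeping on hypotheses already in force.
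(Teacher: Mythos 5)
Your argument is correct and is essentially the proof the paper omits: the corollary does \emph{not} follow from simply observing $\tnorm{\cdot}\leq\tnorm{\cdot}_{bal}$ and quoting Theorem~\ref{thm:main} (that would retain the $(\ln N)^{1/2}$), and you rightly return to the split $\tnorm{(\eta,\veta)}+\tnorm{(\xi,\vxi)}$, where the interpolation part is log-free by Lemma~\ref{lem:est:eta2} and the discrete part carries the factor $\eps^{1/2}(\ln N)^{1/2}$ that must be absorbed. Your derivation of $\eps\ln N\lesssim 1$ from $h\gtrsim \eps N^{-1}$ together with $h\eps\lesssim N^{-2}$ is valid; note that the same bound is available more directly from the transition-point assumption $\lambda=\sigma\eps\ln N\leq \tfrac14$, which gives $\eps\ln N\leq (4\sigma)^{-1}$ without invoking the hypothesis $h\eps\lesssim N^{-2}$ at all.
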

  \begin{remark}
    On a Shishkin mesh we have $h_i=\eps N^{-1}\ln N$ inside the boundary domain.
    Thus the condition on $\eps h$ becomes
    \[
      \eps^2\lesssim \frac{N^{-1}}{\ln N}
      \qmbox{and}
      h+N^{-1}\max|\psi'|
      \lesssim N^{-1}\ln N.
    \]
    On a Bakhvalov S-mesh we have $h\sim\eps$ and the condition becomes
    \[
      \eps\lesssim N^{-1}
      \qmbox{and therefore}
      h+N^{-1}\max|\psi'|
      \lesssim N^{-1}.
    \]
  \end{remark}
  \begin{remark}
    The same analysis can also be conducted for the Arnold-Boffi-Falk element 
    \[
      \DS(K)=ABF_k(K):=\QS_{k+2,k}(K)\times\QS_{k,k+2}(K),
    \]
    see \cite{ABF05}, using $\Div\DS(K)=\QS_{k+1}(K)\setminus\text{span}\{x^{k+1}y^{k+1}\}$
    as discrete space for the first component. Anisotropic interpolation error estimates are given
    in \cite{Fr21}. Although $\norm{\eta}{L^2(\Omega)}$ and $\norm{\Div\veta}{L^2(\Omega)}$ can be estimated with order $k+2$, we obtain 
    only convergence rates of order $k+1$ due to $\norm{\veta}{L^2(\Omega)}\lesssim \eps^{1/2}(h+N^{-1}\max|\psi'|)^{k+1}$.
  \end{remark}

  \section{Numerical experiments}\label{sec:numerics} 
  Let us consider on $\Omega=(0,1)^2$
  \[
    -\eps^2\laplace u+cu=f,
  \]
  where
  \[
    c=1+x^2y^2\e^{xy/2}
    \rarrow c_0=1,\,c_\infty=1+\e^{1/2},\,\delta=\frac{1}{(1+\e^{1/2})^2}
  \]
  and an exact solution
  \[
    u=\left( \cos\left(\frac{\pi x}{2}\right)-\frac{\e^{-x/\eps}-\e^{-1/\eps}}{1-\e^{-1/\eps}}\right)\cdot
      \left( 1-y- \frac{\e^{-y/\eps}-\e^{-1/\eps}}{1-\e^{-1/\eps}}\right)
  \]
  is prescribed, see \cite{RSch15,AMM19} for $c=1$. The solution has only boundary layers at $x=0$ and $y=0$, and a corner layer at $(0,0)$.
  Therefore, we modify our mesh accordingly. For our experiments we will always use Bakhvalov-S-meshes.
 
  All computations were done in $\mathbb{SOFE}$, a finite-element framework in Matlab and Octave, 
  see \texttt{github.com/SOFE-Developers/SOFE}.
  
  Let us start the numerical investigation by looking at the dependence on $\eps$. For that we fix
  $N=16$ and use $RT_1$-elements, and vary $\eps\in\{10^{-3},\,10^{-4},\,10^{-5},\,10^{-6}\}$. We obtain the numbers in Table~\ref{tab:uniform}.
  \begin{table}[tbp]
    \caption{Errors in various norms for varying values of $\eps$ and fixed $N$, $RT_1$.\label{tab:uniform}}
%     \footnotesize
    \begin{center}
      \begin{tabular}{cccccc}
        $\eps$ & $\norm{u-u_h}{L^2(\Omega)}$
              & $\norm{\vu-\vu_h}{L^2(\Omega)}$
              & $\norm{\Div(\vu-\vu_h)}{L^2(\Omega)}$
              & $\tnorm{U-U_h}$
              & $\tnorm{U-U_h}_{bal}$\\
        \hline
   $10^{-3}$ &  5.904e-04 & 4.492e-05 & 1.445e-01 & 5.946e-04 & 2.311e-03\\
   $10^{-4}$ &  5.867e-04 & 1.401e-05 & 4.591e-01 & 5.871e-04 & 2.304e-03\\
   $10^{-5}$ &  5.863e-04 & 4.421e-06 & 1.452e-00 & 5.863e-04 & 2.303e-03\\
   $10^{-6}$ &  5.863e-04 & 1.398e-06 & 4.593e-00 & 5.863e-04 & 2.303e-03
      \end{tabular}
    \end{center}
  \end{table}
  We observe $\norm{u-u_h}{L^2(\Omega)}$ to be independent of $\eps$, as expected, while
  $\norm{\vu-\vu_h}{L^2(\Omega)}\lesssim \eps^{1/2}$ and 
  $\norm{\Div(\vu-\vu_h)}{L^2(\Omega)}\lesssim \eps^{-1/2}$, also both as expected.
  Consequently, $\tnorm{U-U_h}$ stays independent of $\eps$, due to the dominating effect of 
  $\norm{u-u_h}{L^2(\Omega)}$, and the larger balanced norm $\tnorm{U-U_h}_{bal}$ is independent too due to the correct 
  weighting of the other two norms.
  
  Now let us come to the convergence orders. For that purpose we fix $\eps=10^{-4}$ and vary for 
  different values of $k$ the number $N$ of cells per dimension. We start with Raviart-Thomas elements
  and obtain the results of Table~\ref{tab:RT}.
  \begin{table}[tbp]
    \caption{Errors $\tnorm{U-U_h}_{bal}$ for fixed $\eps=10^{-4}$ in the Raviart-Thomas case.\label{tab:RT}}
%     \footnotesize
    \begin{center}
      \begin{tabular}{ccccccc}
        $N$& \multicolumn{2}{c}{$k=1$}
          & \multicolumn{2}{c}{$k=2$}
          & \multicolumn{2}{c}{$k=3$}\\
        \hline
     8 & 9.250e-03 &      & 1.059e-03 &      & 1.640e-04 &     \\
    16 & 2.304e-03 & 2.01 & 1.518e-04 & 2.80 & 1.238e-05 & 3.73\\
    32 & 5.679e-04 & 2.02 & 2.023e-05 & 2.91 & 8.433e-07 & 3.88\\
    64 & 1.402e-04 & 2.02 & 2.603e-06 & 2.96 & 5.480e-08 & 3.94\\
   128 & 3.479e-05 & 2.01 & 3.298e-07 & 2.98 & 3.488e-09 & 3.97\\
   256 & 8.657e-06 & 2.01 & 4.148e-08 & 2.99 & 2.198e-10 & 3.99\\
   512 & 2.158e-06 & 2.00
      \end{tabular}
    \end{center}
  \end{table}
  Here along with the computed errors also the estimated rates of convergence are given
  and they are close to the expected rates of $k+1$ for the balanced norm. 

  In the case of Brezzi-Douglas-Marini elements we get Table~\ref{tab:BDM}.
%   
%   \begin{table}[tb]
%     \caption{Errors is various norms for varying $N$ and fixed $\eps=10^{-5}$ in the Brezzi-Douglas-Marini case.\label{tab:BDM}}
%     \begin{tabular}{ccccccccc}
%        & \multicolumn{4}{c}{$k=1$}
%        & \multicolumn{4}{c}{$k=2$}\\
%     \hline
%     $N$ & \multicolumn{2}{c}{$\norm{\Div(\vu-\vu_h)}{L^2(\Omega)}$} 
%         & \multicolumn{2}{c}{$\tnorm{U-U_h}_{bal}$}
%         & \multicolumn{2}{c}{$\norm{\Div(\vu-\vu_h)}{L^2(\Omega)}$} 
%         & \multicolumn{2}{c}{$\tnorm{U-U_h}_{bal}$}\\
%     \hline
%     \end{tabular}
%   \end{table}
  \begin{table}[tbp]
    \caption{Errors $\tnorm{U-U_h}_{bal}$ for fixed $\eps=10^{-4}$ in the Brezzi-Douglas-Marini case.\label{tab:BDM}}
%     \footnotesize
    \begin{center}
      \begin{tabular}{ccccccc}
        $N$& \multicolumn{2}{c}{$k=1$}
           & \multicolumn{2}{c}{$k=2$}
           & \multicolumn{2}{c}{$k=3$}\\
        \hline
          8 & 8.406e-01 &      & 1.520e-02 &      & 2.700e-03 &     \\
         16 & 6.451e-01 & 0.38 & 3.420e-03 & 2.15 & 3.624e-04 & 2.90\\
         32 & 3.519e-01 & 0.87 & 8.492e-04 & 2.01 & 4.606e-05 & 2.98\\
         64 & 1.796e-01 & 0.97 & 2.144e-04 & 1.99 & 5.553e-06 & 3.05\\
        128 & 7.124e-02 & 1.33 & 5.403e-05 & 1.99 & 6.631e-07 & 3.07\\
        256 & 2.186e-02 & 1.70 & 1.356e-05 & 1.99 & 8.129e-08 & 3.03\\
        512 & 5.906e-03 & 1.89 & 3.390e-06 & 2.00 & 1.027e-08 & 2.98\\
       1024 & 1.566e-03 & 1.92 
      \end{tabular}                                                        
    \end{center}                                                           
\end{table}                                                                
  As expected we only see rates of $k$ in the balanced version with slightly better results for the lowest order case.
  The reason for this behaviour lies in the components of the balanced norms, where the faster converging ones
  dominate for smaller values of $N$ the balanced norm. A closer look reveals $\norm{u-u_h}{L^2(\Omega)}$ and
  $\norm{\Div(\vu-\vu_h)}{L^2(\Omega)}$ only to be convergent with order 1, see Table~\ref{tab:BDM1}.
% % 
%   \begin{table}[tbp]
%     \caption{Errors is various norms for fixed $\eps=10^{-4}$ in the $BDM_1$-case.\label{tab:BDM1}}
%     \begin{center}
%       \begin{tabular}{ccccccc}
%         $N$ & \multicolumn{2}{c}{$\norm{u-u_h}{L^2(\Omega)}$}
%             & \multicolumn{2}{c}{$\norm{\vu-\vu_h}{L^2(\Omega)}$} 
%             & \multicolumn{2}{c}{$\norm{\Div(\vu-\vu_h)}{L^2(\Omega)}$}\\
%         \hline
%      8 & 2.786e-03 &      &  8.386e-03 &      &  1.514e+01 &      \\
%     16 & 9.940e-04 & 1.49 &  6.444e-03 & 0.38 &  8.036e+00 & 0.91 \\
%     32 & 4.369e-04 & 1.19 &  3.516e-03 & 0.87 &  4.117e+00 & 0.96 \\
%     64 & 2.114e-04 & 1.05 &  1.794e-03 & 0.97 &  2.082e+00 & 0.98 \\
%    128 & 1.051e-04 & 1.01 &  7.113e-04 & 1.33 &  1.047e+00 & 0.99 \\
%    256 & 5.254e-05 & 1.00 &  2.177e-04 & 1.71 &  5.249e-01 & 1.00 \\
%    512 & 2.629e-05 & 1.00 &  5.822e-05 & 1.90 &  2.629e-01 & 1.00 \\
%   1024 & 1.315e-05 & 1.00 &  1.485e-05 & 1.97 &  1.315e-01 & 1.00
%       \end{tabular}
%     \end{center}
%   \end{table}
% %   
  \begin{table}[tbp]
    \caption{Errors is various norms for fixed $\eps=10^{-4}$ in the $BDM_1$-case.\label{tab:BDM1}}
    \begin{center}
      \begin{tabular}{ccccccc}
        $N$ & \multicolumn{2}{c}{$\norm{u-u_h}{L^2(\Omega)}$}
            & \multicolumn{2}{c}{$\eps^{-1/2}\norm{\vu-\vu_h}{L^2(\Omega)}$} 
            & \multicolumn{2}{c}{$(\delta\eps)^{1/2}\norm{\Div(\vu-\vu_h)}{L^2(\Omega)}$}\\
        \hline
     8 & 2.786e-03 &      &  8.386e-01 &      & 5.716e-02 &     \\
    16 & 9.940e-04 & 1.49 &  6.444e-01 & 0.38 & 3.034e-02 & 0.91 \\
    32 & 4.369e-04 & 1.19 &  3.516e-01 & 0.87 & 1.554e-02 & 0.96 \\
    64 & 2.114e-04 & 1.05 &  1.794e-01 & 0.97 & 7.860e-03 & 0.98 \\
   128 & 1.051e-04 & 1.01 &  7.113e-02 & 1.33 & 3.953e-03 & 0.99 \\
   256 & 5.254e-05 & 1.00 &  2.177e-02 & 1.71 & 1.982e-03 & 1.00 \\
   512 & 2.629e-05 & 1.00 &  5.822e-03 & 1.90 & 9.926e-04 & 1.00 \\
  1024 & 1.315e-05 & 1.00 &  1.485e-03 & 1.97 & 4.965e-04 & 1.00
      \end{tabular}
    \end{center}
  \end{table}
%   

%----------------------------------------------------------------------
%  Literature
%----------------------------------------------------------------------
  \bibliographystyle{plain}
  \bibliography{lit}

\end{document}